\documentclass[a4paper,12pt]{amsart}
\usepackage{a4wide,graphicx,amssymb}
\usepackage{tikz}
\usetikzlibrary{decorations.pathreplacing,arrows}
\usetikzlibrary{shapes}
 \usepackage{enumerate}
 \usepackage{multirow}
 \usepackage{multicol}

 \newcommand{\at}{\makeatletter @\makeatother}           

\newcommand{\mast}{\operatorname{mast}}
\newcommand{\tw}{\operatorname{tw}}

\newif\ifdetails
\detailstrue
\newcommand{\DETAIL}[1]%
{\ifdetails\par\fbox{\begin{minipage}{0.9\linewidth}\textit{Detail:}
      #1\end{minipage}}\par\fi}
\newcommand{\TODO}[1]%
{\ifdetails\par\fbox{\begin{minipage}{0.9\linewidth}\textbf{TODO:}
      #1\end{minipage}}\par\fi}

\newtheorem{lemma}{Lemma}

\newtheorem{theorem}[lemma]{Theorem}

\definecolor{changecolor}{RGB}{192,64,0}

\usepackage[normalem]{ulem}

\newcommand{\old}[1]{{}}

\title{On agreement subtrees in multiple phylogenetic trees}

\author{Bryan Currie} 
\author{\'Eva Czabarka}
\author{Mareike Fischer}
\author{Henry Simmons}
\author{L\'aszl\'o Sz\'ekely}
\author{Kristina Wicke} 

\address{Bryan Currie \\ Department of Mathematical Sciences\\ New Jersey Institute of Technology\\ Newark, NJ\\ USA}
\email{bc479@njit.edu}
\address{\'Eva Czabarka\\Department of Mathematics \\ University of South Carolina \\ Columbia SC 29212 \\ USA}
\email{czabarka@math.sc.edu}
\address{Mareike Fischer\\Institute of Mathematics and Computer Science\\ University of 
Greifswald\\ Greifswald, Germany}
\email{mareike.fischer@uni-greifswald.de}
\address{Henry Simmons\\Department of Mathematics \\ University of South Carolina \\ Columbia SC 29212 \\ USA}
\email{has11@email.sc.edu}
\address{L\'aszl\'o Sz\'ekely\\Department of Mathematics \\ University of South Carolina \\ Columbia SC 29212 \\ USA}
\email{szekely@math.sc.edu}
\address{Kristina Wicke\\ Department of Mathematical Sciences\\ New Jersey Institute of Technology\\ Newark, NJ\\ USA\\ and National Institute for Theory and Mathematics in Biology\\ Northwestern University and The University of Chicago\\ Chicago, IL\\ USA }
\email{kristina.wicke@njit.edu}

\subjclass[2020]{Primary 05C05; secondary 05D10}
\keywords{maximum agreement subtree, quartet split, Ramsey theorem,  bipartition}
\thanks{
This material is based upon work supported by the National Science Foundation under Grant No. DMS-1929284 while the authors were in residence at the Institute for Computational and Experimental Research in Mathematics in Providence, RI, during the "Exploring The Graph-Theoretical Properties Of Semi-Directed Phylogenetic Networks" Colloborate{\at}ICERM program.
}

 \numberwithin{equation}{section}

\begin{document}

\begin{abstract}
Snir and Yuster [{\em Discrete Appl. Math.} {\bf 347}(2026) 160--171]
asked for the least number $h(k)$ such that  $k$ unrooted binary phylogenetic trees on the same $h(k)$ leaves always share a common quartet. We give a new upper bound
for the $k$-tree version of the Maximum Agreement Subtree problem, namely an upper bound for
the number of leaves, on which
$k$ unrooted binary phylogenetic trees always share a
common induced binary subtree on $n$ leaves, which is a four-times iterated exponential function. For $h(k)$, this implies a four-times
iterated exponential upper bound. We also set an exponential lower bound for $h(k)$. 
\end{abstract}

\maketitle

\section{Phylogenetic trees}
A {\it phylogenetic tree} is
a binary tree in which the leaves are labeled bijectively with
labels from a set $X$ (usually $X= \{1,2,...,n\}$) and internal vertices
are unlabeled.
Two phylogenetic trees are considered
 the same, if there is a label-preserving
graph isomorphism between them. Note that  phylogenetic trees in this note
are not rooted.

If $T$ is a phylogenetic tree  and $ Y\subseteq X$  is a set of labels,
then the {\it  induced binary subtree }
$T|_Y$ is defined as follows:
(a) take the subtree induced by $Y$ in $T$, and (b)
substitute paths in which all internal vertices have degree 2
 by edges. 
If $|Y|=4$, the induced binary subtree is often identified with an
unordered partition of $Y$ into two two-element sets, obtained by
removing the (unique) internal edge of  $T|_Y$. This partition
is known as a {\it quartet split}. It has long been known that the $
\binom{n}{4}$
quartet splits of a phylogenetic tree with $n$ leaves determine the
phylogenetic tree through a polynomial time algorithm. This was
first observed in 1981 by Colonius and Schultze \cite{colonius},
in the context of stemmatology. The theory of quartet splits was developed further in 1986 by
Bandelt and Dress \cite{ban86}, who 
also noted that 
\begin{equation} \label{qsplit _on_6}
\hbox{ any two phylogenetic trees on the same  $n\geq 6$ leaves share a quartet split.}
\end{equation}
We use this fact several times in this paper.

An important algorithmic problem, known as the {\em Maximum Agreement Subtree
Problem}, is the following: given two phylogenetic trees, both labeled with elements of the same set $X$, find a
common induced binary subtree of the largest possible size.

The  Maximum Agreement Subtree
Problem has a long history, starting with   papers in the early 1980s by Gordon
 \cite{gordon}, and
Finden and Gordon \cite{fgordon}. 
Somewhat surprisingly, this
problem can be solved in polynomial time \cite{cole2000,stewar}.
The  more general 
version of the Maximum Agreement Subtree
Problem has been posed for more than two trees. It still can be solved in polynomial time, which is
linear in the number of trees \cite{Farach}.
 Note, however, that for more than two trees, the problem becomes NP-hard when the phylogenetic trees are allowed to have unbounded degree rather than being restricted to binary trees~\cite{amir1997}.

Here we focus on the extremal version of the problem, where we ask how small a maximum agreement subtree
can possibly be between two phylogenetic trees with the same $n$ leaves. This minimum is denoted 
by $\mast_2(n)$. The order of magnitude of  $\mast_2(n)$ was a long-standing open problem. Finally, in 2020, Alexey Markin~\cite{markin} proved  
\begin{equation} \label{markin}
\mast_2(n)=\Theta(\log n).
\end{equation} Similarly, we define by $\mast_k(n)$ the smallest size
of the Maximum Agreement Subtree of $k$ phylogenetic trees on the same $n$ leaves. It is easy to see
that for $k\ge 3$ we have
\begin{equation} \label{mastrec}
\mast_k(n)\geq \mast_2(\mast_{k-1}(n)).
\end{equation}

For convenience, especially for using results from Ramsey theory, we introduce $M_k(m)$, which is essentially
the inverse of $\mast_k(n)$, as
\begin{equation} \label{Mdef}
M_k(m)=\min\{n: \, \mast_k(n)\geq m\}.
\end{equation}
By the result of Markin (\ref{markin}), $M_2(m)$ grows exponentially, and the obvious 
recursion, which holds for  all $k\geq 3$, is
\begin{equation} \label{Mrec}
M_k(n)\leq M_2(M_{k-1}(n)).
\end{equation}
For a fixed $k$, (\ref{Mrec}) gives a $(k-1)$-times iterated exponential upper bound for $M_k(n)$. 
We are going to get a better upper bound
for $M_k(n)$, which uses only 4-times iterated exponentials;
and hence a better lower bound for $\mast_k(n)$, which uses only 4-times iterated logarithms. This gives an immediate extension of the range of Theorem 7 of Czabarka, Kelk, Moulton and Sz\'ekely  in \cite{coconvex}, using 4-times iterated logarithms instead of $(t-1)$-times iterated logarithms,  as follows:

\begin{theorem}  
 For every fixed  $t\ge 2$, 
 for some $c_t>0$,
 for every sufficiently large $n$, for all integers $k$ with $n-c_t\log\log\log\log n \leq  k\leq n-4$,
 for any $t$ phylogenetic trees with the same $n$
 leaves, there exists a  partition of the common leaf set into $k$ nonempty classes, such that
 \begin{itemize}
 \item the partition has at least two classes with at least two elements, and 
 \item in each of the $k$ trees, the partition of the leaves appears in such a way that after the removal of some edges of the tree, the partition classes are exactly the leaf sets of the remaining connected components. 
\end{itemize}
 \end{theorem}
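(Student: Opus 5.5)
Since the paper presents this theorem as an immediate strengthening of Theorem 7 of \cite{coconvex}, I would follow the same reduction and only replace the quantitative input. The key point is that Theorem 7 of \cite{coconvex} derives its range from a lower bound on $\mast_t(n)$. The lower bound comes from the recursion (\ref{mastrec}) together with (\ref{markin}), which gives $(t-1)$-times iterated logarithms. Here we substitute the main result of this paper: $M_t(m)$ is at most a 4-times iterated exponential in $m$ (with constants depending on $t$). Equivalently, $\mast_t(n)\ge c'_t\log\log\log\log n$ for all sufficiently large $n$.

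The construction proceeds as follows. Let $s=n-k$, so that $4\le s\le c_t\log\log\log\log n$. Choose $c_t$ so that $\mast_t(n)\ge s+2$ whenever $n$ is large; this is possible by the improved bound. Then there is a set $Y$ of $m\ge s+2$ leaves such that all $t$ trees induce the same binary tree $T|_Y$. Inside this common subtree, pick a sub-collection $Y'\subseteq Y$ of size exactly $s+2$ if needed (restriction preserves agreement). Using the shape of $T'=T|_{Y'}$, choose two disjoint cherries, or more generally two disjoint pendant subtrees, in $T'$. The aim is to merge $s$ "excess" leaves into blocks, so that the count comes out to $k=n-s$ classes. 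For instance, take blocks $A$ and $B$ that are leaf sets of disjoint pendant subtrees of $T'$ with $|A|+|B|-2=s$, and $|A|,|B|\ge2$. Such blocks exist once $|Y'|\ge s+2$ and $s\ge 2$: in a binary tree, pendant subtrees of every size up to roughly half can be found, and otherwise one uses cherries plus a larger clade. All other leaves of $X$ become singletons. This yields $n-|A|-|B|+2=k$ classes, at least two of which have size at least $2$.

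The remaining task is to verify convexity in each of the $t$ trees. A class $A$ that is a pendant clade in $T|_{Y'}$ need not be a pendant clade in $T$ itself, because other leaves of $X$ may hang off the path system spanning $A$. This is exactly the issue handled in \cite{coconvex}. There, the requirement is that the minimal subtrees spanning the classes be vertex-disjoint, with the remaining leaves as singletons; singletons are always realizable by deleting edges. So the condition needed is only that the spanning subtrees $T_i[A]$ and $T_i[B]$ are disjoint in each tree $T_i$. This follows because $A$ and $B$ are separated by an edge of $T'$, hence by an edge in each $T_i$. Deleting the edges of $T_i$ not in $T_i[A]\cup T_i[B]$ then leaves exactly the components $T_i[A]$, $T_i[B]$, and isolated leaves. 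Note that this deletion removes internal vertices not in the spanning subtrees; they become components without leaves, so I would follow the convention of \cite{coconvex} on components containing no leaves.

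I expect the main obstacle to be this combinatorial step: producing two agreeing blocks of prescribed total size from a common subtree of size about $s$, for every $s$ in the range. I would first attempt to reuse the argument of \cite{coconvex} verbatim, since its proof is parametric in a lower bound for $\mast_t(n)$. If that works, the whole proof reduces to plugging in the 4-times iterated logarithm lower bound and adjusting $c_t$.
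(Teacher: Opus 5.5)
Your route is the paper's own. The paper gives no separate argument for this theorem: it simply feeds the new lower bound on $\mast_t(n)$ implied by \eqref{mainineq} into the proof of Theorem~7 of \cite{coconvex}. Your reconstruction of that proof works, but two details need fixing.

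First, the step you call the main obstacle is immediate. Once $|Y'|=s+2$, the condition $|A|+|B|=s+2$ forces $A\cup B=Y'$. So take $\{A,B\}$ to be the split of $T'$ given by any internal edge; one exists since $|Y'|\ge 6$, and both sides automatically have at least two leaves.

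Your stated justification, however, is wrong. Pendant subtrees of every size up to about half need not exist: in a balanced tree, the smaller sides of edges all have power-of-two sizes. For $|Y'|>s+2$ your claim can fail outright. For example, with $s=5$ and a balanced tree on $16$ leaves, no two disjoint pendant subtrees with both sizes at least $2$ have total size $7$. So restricting to exactly $s+2$ leaves is essential, not ``if needed.''

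Second, deleting every edge outside $T_i[A]\cup T_i[B]$ can leave leafless components, which the statement does not allow. Instead, start with the vertex-disjoint parts $T_i[A]$, $T_i[B]$ and the singleton leaves. Attach each remaining vertex to an adjacent part until $V(T_i)$ is partitioned into connected parts, and then delete only the edges between parts.
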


\section{An improved upper bound on the $M_k(n)$ function}
Recall that for $k,r,a\in\mathbb{Z}^+$, the {\em Ramsey number} $R_k^r(a)$ denotes the smallest integer $m$
such that for every coloring of the $r$-element subsets of any $m$-element set with $k$ colors, there exists an $a$-element subset $A$ of the $m$-element set whose $r$-element subsets are all assigned the same color.  Ramsey's Theorem asserts that these numbers
exist. Steel and Sz\'ekely \cite{our_mast} connected the Maximum Agreement Subtree problem to Ramsey theory  by showing \[{\mast}_2[ R_2^4(n,6) ] \geq n.\]

The following recursive bound from Erd\H os and Rado holds for Ramsey numbers (\cite{ErdosRado}, see also \cite{lovasz}, Ex. 14.3):
\begin{equation} \label{Rrec}
 R_k^{r+1}(a)\leq k^{[ R_k^{r}(a) ]^r }.   
\end{equation}
In particular, 
\begin{equation} \label{r=2}
R_k^2(a)\leq k^{ka}.\end{equation} 
Define the tower functions $\tw_i(\cdot)$ recursively as follows: 
$\tw_0(x)=x$ and for $i\ge 1$,  $\tw_{i}(x)=2^{\tw_{i-1}(x)}$.
We use the notation $\log$ for the base 2 logarithm.

The recurrence relation (\ref{Rrec}) implies by induction, 
starting at $r=2$ and 
using (\ref{r=2}) for the base case, that for every fixed $r$ and all $\ell\geq 3$, we have 
\begin{equation} \label{Ramseybound}
 R_\ell^{r}(a)\leq \tw_{r-1}\big(2a\ell\log \ell +O(\log \log \ell)\big),   
\end{equation}
where the $O(\cdot)$-term is with respect to $\ell\rightarrow\infty$.

\begin{theorem} For $n\geq 6$ and $k\geq 3$, we have
\begin{equation}\label{mainineq}M_k(n)\leq R_{2^{k-1}}^4(n)
    \leq \tw_3\Bigg( 2^k(k-1)n +O(\log k )  \Bigg), 
    \end{equation}
 where the $O(\cdot)$-term is with respect to $k\rightarrow\infty$.   
\end{theorem}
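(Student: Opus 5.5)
The plan is a Ramsey coloring argument on quartets, followed by a routine estimate using (\ref{Ramseybound}).

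\textbf{First inequality.} Let $T_1,\dots,T_k$ be phylogenetic trees on a common leaf set $X$ with $|X|=N=R_{2^{k-1}}^4(n)$. Fix a linear order on $X$. For a $4$-set $\{a<b<c<d\}\subseteq X$, each tree $T_i$ displays exactly one of the three quartet splits $ab|cd$, $ac|bd$, $ad|bc$, which we encode as $s_i\in\{1,2,3\}$. A naive coloring by $(s_1,\dots,s_k)$ uses $3^k$ colors, which is too many. Instead we record only the $k-1$ bits $\chi_i=[s_i=s_1]$ for $i=2,\dots,k$, giving $2^{k-1}$ colors. By the definition of $R_{2^{k-1}}^4(n)$, there is a set $A$ with $|A|=n$ all of whose quartets receive the same color vector $(\chi_2,\dots,\chi_k)$.

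If every $\chi_i=1$, then all $k$ trees agree on every quartet of $A$. Since the quartet splits determine the tree, $T_1|_A=\dots=T_k|_A$, which is a common agreement subtree on $n$ leaves.

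The main obstacle is to show that some $\chi_i=0$ is impossible when $n\ge 6$. Suppose $\chi_i=0$ on all quartets of $A$. Then $T_1|_A$ and $T_i|_A$ disagree on every quartet of $A$. Since $|A|=n\ge 6$, this contradicts (\ref{qsplit _on_6}), which says that any two trees on the same $n\ge 6$ leaves share a quartet split. So every $\chi_i=1$. Note that the ordering is not actually needed: the bit $[s_i=s_1]$ is just "$T_i$ and $T_1$ induce the same split", which is well defined without it. Hence $M_k(n)\le N$.

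\textbf{Second inequality.} Apply (\ref{Ramseybound}) with $r=4$, $\ell=2^{k-1}$ and $a=n$:
\[
R_{2^{k-1}}^4(n)\le \tw_3\big(2n\,2^{k-1}(k-1)+O(\log\log 2^{k-1})\big)=\tw_3\big(2^k(k-1)n+O(\log k)\big).
\]
Here $\ell\log\ell=2^{k-1}(k-1)$, and $\ell\ge 3$ holds because $k\ge 3$. The only care needed is that the $O(\log\log\ell)=O(\log k)$ term is uniform in $n$, which I would take as given by (\ref{Ramseybound}).
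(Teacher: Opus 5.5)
Your proposal is correct and follows essentially the same route as the paper: you color quartets by the $2^{k-1}$ agreement bits relative to $T_1$, use (\ref{qsplit _on_6}) on the homogeneous $n$-set to force every bit to be $1$, and conclude via quartet determination. You then plug $r=4$, $\ell=2^{k-1}$, $a=n$ into (\ref{Ramseybound}), just as the paper does; the linear order you introduce is, as you note yourself, unnecessary.
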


\begin{proof}
Consider $k$ arbitrary phylogenetic trees,
$T_1,T_2,\ldots,T_k$ on the same set of $R^4_{2^{k-1}}(n)$ leaves. We 
color each 4-tuple of the leaf set by a binary vector of length $k-1$, thereby obtaining a $2^{k-1}$-coloring of the 4-tuples of the leaf set. The coordinates of the vectors are 
indexed by the integers $i$ with $2\leq i\leq k$, where the $i$th coordinate corresponds to the tree $T_i$. 

For each $4$-tuple of leaves, $Y = \{a,b,c,d\}$, define its color to be the vector $(c_2, \ldots, c_k)$, where
\[ 
c_i = \begin{cases}
1, &\text{if } T_i|_Y = T_1|_Y \text{ (i.e., $T_i$ and $T_1$ induce the same quartet split on $Y$)}\\
0, &\text{otherwise.}
    \end{cases}
\]

By the choice of the number of leaves, {namely $R^4_{2^{k-1}}(n)$,} there exists a set $A$ of $n$ leaves such that any 4-tuple of leaves in $A$ is colored by the same color vector $(c_2,\ldots,c_k)$. 

We claim that this color vector consists entirely of $1$'s, i.e., $c_i=1$ for every $2 \leq i \leq k$. 
To see this, let $i$ be arbitrary with $2\le i\le k$. As the $4$-tuples in $A$ are all colored $(c_2,\ldots,c_k)$, to show $c_i=1$ it suffices to find a $4$-subset of $A$ on which $T_1$ and $T_i$ induce the same quartet split. Such a quartet exists because $|A|=n\ge6$ and by \eqref{qsplit _on_6}.

Thus, $c_i=1$ for all $2\le i\le k$. Hence, every quartet of $A$ is resolved identically in $T_1, \ldots, T_k$. 
Since a phylogenetic tree is uniquely determined by its quartet splits (as discussed in the introduction), the induced subtrees $T_1|_A, \ldots, T_k|_A$ are identical. Therefore the $k$ phylogenetic trees $T_1,\ldots, T_k$ have an agreement subtree on the leaf set $A$. In particular, they have an agreement subtree of size $|A|=n$.

Finally, we apply the estimate in 
(\ref{Ramseybound})
with $r=4$ and  $\ell=2^{k-1}$.
{This gives
\[R_{2^{k-1}}^4(n) \leq \tw_3\left(2n \, 2^{k-1}\log(2^{k-1})+O(\log\log(2^{k-1}))\right).\]
Since $\log(2^{k-1})=k-1$ and $\log\log(2^{k-1})=O(\log k)$, we obtain
\[R_{2^{k-1}}^4(n)\leq\tw_3\left(2^k(k-1)n+O(\log k)\right),\]
as claimed.}
\end{proof}

\section{Multiple  bipartitions}
A recent paper of Sagi Snir and Raphael Yuster~\cite{sagi} studies the superposition of bipartitions on common leaf sets of $k$ trees. In particular, they investigate how many leaves would guarantee $t$ vertices in the intersection of some partition classes coming from the $k$ trees by the removal of an edge, and simultaneously $t$ vertices in the intersection of the complements of the  partition classes. (The paper \cite{sagi} does many other things that we do not discuss here in order to keep the presentation focused.) Let $g(k,n)$ denote the {\em largest} value of $t$ that can be achieved, given any {collection of} $k$ binary $n$-leaf trees on the same leaf set. 

Theorem 1.3 of \cite{sagi} states that
\begin{equation} \label{idezet}
\frac{n}{6}\leq g(2,n) \hbox{\ \ \ and \ \ \ } \frac{2n}{27}\leq g(3,n),
\end{equation}
and that these bounds are asymptotically the best possible.

The first and the second part of (\ref{idezet}) are substantially different. 
The first part of the displayed
formula  (\ref{idezet}) is a special case of an observation of Joel Spencer \cite{spencer} made in the context of 2-planar crossing numbers of random graphs,
where certain lower bounds for crossing numbers hinge on 1/3-2/3 bipartitions of the vertex set of the graph.
We restate Spencer's result in a formulation taken from \cite{asplund}:

\begin{theorem} \label{spencerform}
Let $X$ be an $n$-element set and let $X=X_{11}\cup X_{12}$ and 
$X=X_{21}\cup X_{22}$ 
be any two bipartitions of $X$ such that $\min\{|X_{ij}|:i,j\in \{1,2\}\}\ge n/3$ . Then there exist two subsets $Y_1$  and $Y_2$ of $X$, such that $\min\{|Y_1|,|Y_2|\}\ge n/6$, which lie on different sides of both bipartitions. That is, either  $Y_1\subseteq   X_{11} \cap X_{21}$ and 
$Y_2\subseteq   X_{12} \cap X_{22}$ or    $Y_1\subseteq   X_{11} \cap X_{22}$ and 
$Y_2\subseteq   X_{12} \cap X_{21}$. 
\end{theorem}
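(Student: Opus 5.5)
Write $a=|X_{11}\cap X_{21}|$, $b=|X_{11}\cap X_{22}|$, $c=|X_{12}\cap X_{21}|$, $d=|X_{12}\cap X_{22}|$. The plan is to reduce the statement to a counting inequality about these four cells and then argue by contradiction. These are the four cells of the common refinement, so $a+b+c+d=n$. The hypothesis $\min|X_{ij}|\ge n/3$ gives four linear constraints:
\[
a+b\ge n/3,\quad c+d\ge n/3,\quad a+c\ge n/3,\quad b+d\ge n/3 .
\]
The conclusion asks for one of two ``diagonal'' pairs of cells to be simultaneously large: either $a\ge n/6$ and $d\ge n/6$ (then take $Y_1=X_{11}\cap X_{21}$, $Y_2=X_{12}\cap X_{22}$), or $b\ge n/6$ and $c\ge n/6$ (then take $Y_1=X_{11}\cap X_{22}$, $Y_2=X_{12}\cap X_{21}$). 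The sets $Y_i$ are simply the whole cells, so nothing beyond the cell sizes is needed.

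Suppose, for contradiction, that both diagonals fail. Then $\min(a,d)<n/6$ and $\min(b,c)<n/6$. By the symmetries of the setup we may assume $a<n/6$. These symmetries are swapping the roles of $X_{11},X_{12}$, or of $X_{21},X_{22}$, or of the two bipartitions; they permute the cells while preserving the diagonal structure.

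Next I split according to which of $b,c$ is small.

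If $b<n/6$, then $a+b<n/3$, contradicting $|X_{11}|\ge n/3$.

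If $c<n/6$, then $a+c<n/3$, contradicting $|X_{21}|\ge n/3$.

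Either way we reach a contradiction, so one diagonal pair has both cells of size at least $n/6$.

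The only step needing care is making the symmetry reduction legitimate. Concretely, one checks that each small cell in the first diagonal shares a row or a column with each cell of the other diagonal. Indeed, $a$ shares the row $X_{11}$ with $b$ and the column $X_{21}$ with $c$, and $d$ shares the row $X_{12}$ with $c$ and the column $X_{22}$ with $b$. So whichever of $a,d$ is small and whichever of $b,c$ is small, their sum is the size of some $X_{ij}$ and is below $n/3$. I would write out this four-case check explicitly rather than appeal to symmetry, since it is short. I do not expect a genuine obstacle. The bound $n/6$ is exactly half of $n/3$, which is why the pigeonhole argument closes with strict inequalities.
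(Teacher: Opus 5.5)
Your proof is correct and complete. With $a,b,c,d$ the four cells, the hypothesis says exactly that $a+b$, $c+d$, $a+c$, $b+d\ge n/3$. Each cell of one diagonal shares a row or column with each cell of the other diagonal. So if both diagonals contained a cell of size below $n/6$, some class $X_{ij}$ would have size below $n/3$, a contradiction. Your explicit four-case check makes this airtight.

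The paper gives no proof of this statement. It quotes it from Spencer \cite{spencer}, in the formulation of \cite{asplund}, so there is no in-paper argument to compare against. Your short pigeonhole argument makes the citation unnecessary.

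Two minor remarks:
\begin{itemize}
\item The identity $a+b+c+d=n$ is never used; only the four lower bounds on the classes matter.
\item The symmetry reduction becomes redundant once you write out the four cases, so you can drop it.
\end{itemize}

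Your argument uses only additivity over the four cells. It therefore transfers verbatim to the measure version, Theorem~\ref{measure}, by replacing cardinalities with $\mu$. This is precisely the ``mutatis mutandis'' remark the paper makes.
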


This clearly implies the first part of (\ref{idezet}), as every $n$-leaf phylogenetic tree contains an edge whose removal partitions the tree into two components, each containing at least $n/3$ of the leaves. This fact has been rediscovered many times; the earliest reference we are aware of is \cite{EFRS}. In fact, both \cite{spencer} and \cite{sagi} proved,  mutatis mutandis, the following Theorem~\ref{measure}, if we replace cardinalities by measures in the proof:

\begin{theorem} \label{measure}
Let $(M,\mu)$ be measure space such that $\mu(M)<\infty$. Let $P_1,P_2$ be arbitary bipartitions of $M$ such that partition classes are measurable and every partition class has measure at least $\mu(M)/3$. Then there exist 
measurable sets $A,B$, such that
$A$ and $B$ are subsets of different partition classes of $P_1$ and $P_2$, and $\mu(A)\geq \mu(M)/6$ and $\mu(B)\geq \mu(M)/6$.  
\end{theorem}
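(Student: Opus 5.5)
Write $m=\mu(M)$, $P_1=\{X_{11},X_{12}\}$ and $P_2=\{X_{21},X_{22}\}$. The plan is to mimic the counting argument behind Theorem~\ref{spencerform}, with cardinalities replaced by measures. The natural candidates are the intersections: take either $A=X_{11}\cap X_{21}$, $B=X_{12}\cap X_{22}$, or $A=X_{11}\cap X_{22}$, $B=X_{12}\cap X_{21}$. All four intersections are measurable, and in each pairing $A$ and $B$ lie in different classes of both partitions.

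Set $a=\mu(X_{11}\cap X_{21})$, $b=\mu(X_{11}\cap X_{22})$, $c=\mu(X_{12}\cap X_{21})$ and $d=\mu(X_{12}\cap X_{22})$. These are nonnegative, $a+b+c+d=m$, and by finite additivity the hypotheses become four inequalities:
\[
a+b\ge m/3,\quad c+d\ge m/3,\quad a+c\ge m/3,\quad b+d\ge m/3 .
\]
It then suffices to show that $\min(a,d)\ge m/6$ or $\min(b,c)\ge m/6$.

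Suppose both fail. Then one entry of $\{a,d\}$ and one entry of $\{b,c\}$ are below $m/6$, which leaves four cases, and they reduce by the symmetry of swapping classes. In the case $a<m/6$ and $b<m/6$, we get $a+b<m/3$, contradicting $\mu(X_{11})\ge m/3$. The cases $\{a,c\}$, $\{d,b\}$ and $\{d,c\}$ contradict, respectively, $\mu(X_{21})$, $\mu(X_{22})$ and $\mu(X_{12})\ge m/3$. Every pair consisting of one entry from $\{a,d\}$ and one from $\{b,c\}$ forms a row or column sum of the $2\times 2$ table, so every case gives a contradiction.

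Nothing here is a real obstacle. The only points to check are measurability of the intersections and that the argument uses only finite additivity and finiteness of $\mu(M)$. The bound $1/6$ is exactly what the pair-sum argument yields.
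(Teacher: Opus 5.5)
Your argument is correct: in the $2\times 2$ table of intersection measures, any diagonal entry and any anti-diagonal entry share a row or a column, so if both pairings failed, some class of $P_1$ or $P_2$ would have measure below $\mu(M)/3$. This is the route the paper indicates: it does not write out a proof, but says to take Spencer's (or Snir--Yuster's) cardinality argument and replace cardinalities by measures.
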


The second part of the displayed formula  (\ref{idezet}) is substantially different, as it does not hold for {\em every} choice of three $1/3$-$2/3$ bipartitions, one selected from each of the three phylogenetic trees. This is already shown by the three bipartitions of the  unique 3-leaf phylogenetic tree.

\section{A common quartet of many phylogenetic trees}
Given $k$,  Snir and Yuster \cite{sagi} ask for the smallest $n$ such that any $k$ phylogenetic 
trees on the same set of $n$ leaves share a quartet. They denote this $n$ by $h(k)$ and show that
$10\leq h(3)\leq 14$. The upper bound follows from (\ref{idezet}), as $\lceil 2\cdot 14/27\rceil=2$.
Formula \eqref{qsplit _on_6} means that $h(2)\leq 6$, and it is not difficult to see that $h(2)= 6$.
We observe that the function $h(k)$ is defined for every $k\geq 2$, as $h(k)=M_k(4)$. 

\begin{theorem} For $k\geq 3$, we have
\begin{equation}
8^{1/4}3^{k/4} \leq h(k)   \leq 
\tw_3\Bigg(6(k-1) 2^k +O(\log k )  \Bigg),
\end{equation}
where the $O(\cdot)$-term is with respect to $k\rightarrow\infty$. 
\end{theorem}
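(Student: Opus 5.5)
The upper bound is immediate from the preceding theorem: $h(k)=M_k(4)$, and $M_k$ is monotone in its argument (an agreement subtree on $6$ leaves restricts to one on $4$), so $h(k)\le M_k(6)\le R^4_{2^{k-1}}(6)$. Applying (\ref{mainineq}) with $n=6$ gives $\tw_3(6\cdot 2^k(k-1)+O(\log k))$, which is the stated bound. We need $n\ge 6$ because the proof of (\ref{mainineq}) uses (\ref{qsplit _on_6}) inside the monochromatic set; this is why we pass through $M_k(6)$ rather than applying the theorem at $n=4$.

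For the lower bound we construct many trees with no common quartet, using a probabilistic (first-moment) argument. Take $N$ leaves with $N^4<8\cdot 3^{k}$, i.e.\ $N<8^{1/4}3^{k/4}$. Choose $k$ trees independently and uniformly at random among the trees whose four-leaf restrictions are uniform over the three splits. Fix a $4$-set $Y$. It is a common quartet exactly when all $k$ trees induce the same split on $Y$, which happens with probability $3\cdot 3^{-k}=3^{1-k}$. The expected number of common quartets is therefore $\binom{N}{4}3^{1-k}<\frac{N^4}{24}\,3^{1-k}=\frac{N^4}{8\cdot 3^k}$. If $N^4\le 8\cdot 3^k$ this is below $1$, so some choice of $k$ trees on $N$ leaves shares no quartet, giving $h(k)>N$. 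Taking $N$ as large as possible yields $h(k)\ge 8^{1/4}3^{k/4}$, up to the rounding that the stated bound absorbs. The constants are consistent with this: $\binom N4<N^4/24$ and $24/3=8$ are exactly where the $8^{1/4}$ comes from.

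The main obstacle is the probability model. We need a distribution on binary trees in which, for each fixed $Y$, the induced split is uniform over the three possibilities. The uniform distribution on binary trees with $N$ labelled leaves works, because permuting labels is a symmetry that permutes the three splits of $Y$ transitively. Equivalently, use a uniformly random labelling of a fixed tree shape. Only single-tree marginals matter, since the trees are independent, so no joint dependence between quartets needs controlling and linearity of expectation suffices.
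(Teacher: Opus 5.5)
Your proposal is correct and follows the paper's proof essentially verbatim. The upper bound goes through $h(k)=M_k(4)\le M_k(6)\le R^4_{2^{k-1}}(6)$ and (\ref{mainineq}) with $n=6$, and the lower bound is the first-moment count $\binom{N}{4}3^{1-k}<1$ over $k$ independent uniform random trees; your label-permutation justification of the $1/3$ split probabilities is a detail the paper leaves implicit. Also, the hedge ``up to the rounding'' is unnecessary: taking $N=\lfloor 8^{1/4}3^{k/4}\rfloor$ gives $h(k)\ge N+1>8^{1/4}3^{k/4}$, so no rounding is lost.
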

\begin{proof}
Based on (\ref{mainineq}), we have 
$$ h(k)=M_k(4)\leq M_k(6)\leq R^4_{2^{k-1}}(6)\leq 
\tw_3\Bigg(6(k-1) 2^k +O(\log k )  \Bigg).
$$

For the lower bound, consider $k$ independently and uniformly selected phylogenetic trees on the same set of $n$ leaves.
Fix four leaves $a,b,c,d$. The probability that these leaves induce the same  quartet split in all $k$ trees is ${3 \cdot \left(\frac{1}{3}\right)^k=}3^{1-k}$.
By linearity of expectation, the expected number of quartets that are resolved identically in all $k$ trees is
$\binom{n}{4}3^{1-k}$. 
If $n\leq 8^{1/4}3^{k/4}$, this expectation is less than 1. Consequently, there exists an outcome of the random experiment with no quartet that is resolved identically in all $k$ trees.
\end{proof}

Although hypergraph Ramsey problems typically have lower bounds given by a tower of height one less than that of the corresponding upper bounds, such lower bounds do not automatically carry over to the Maximum Agreement Subtree problem.

We find it curious that, in order to find a common quartet split among trees, we actually need to first guarantee a common induced 6-leaf binary tree.

\end{document}